\newtheorem{lemma}{Lemma}[section]
\newtheorem{theorem}{Theorem}[section]
\newcommand{\field}[1]{\mathbb{#1}}
\newcommand{\PP}{\field{P}}
\newcommand{\NN}{\field{N}}
\def\C{{\mathcal{C}}}
\journal{}
\begin{document}

\begin{frontmatter}



\title{An error estimate for the Gauss-Jacobi-Lobatto quadrature rule}

\author{Concetta Laurita\fnref{Tel}}
\address{Department of Mathematics, Computer Science and Economics, University of Basilicata, Viale dell'Ateneo Lucano n. 10, 85100 Potenza, ITALY}
\ead{concetta.laurita@unibas.it}
\fntext[Tel]{Tel: +390971205846}

\begin{abstract}
An error estimate for the Gauss-Lobatto quadrature formula for integration over
the interval $[-1, 1]$, relative to the Jacobi weight function $w^{\alpha,\beta}(t)=(1-t)^\alpha(1+t)^\beta$, $\alpha,\beta>-1$, is obtained.
This estimate holds true for functions belonging to some Sobolev-type subspaces of the weighted space $L_{w^{\alpha,\beta}}^1([-1,1])$.
\end{abstract}

\begin{keyword}
Gauss-Lobatto formula; Jacobi weight function
\MSC[2020] 65D30, 65D32
\end{keyword}
\end{frontmatter}
%
%
\section{Introduction}
For a Jacobi weight function $w^{\alpha,\beta}(t)=(1-t)^\alpha(1+t)^\beta$, $\alpha,\beta>-1$, on the interval $[-1,1]$ we consider the $(n+2)$-point Gauss-Lobatto rule
\begin{equation} \label{LobattoRule}
\int_{-1}^1 f(t)w^{\alpha,\beta}(t)dt=w_{0}f(-1)+\sum_{k=1}^n w_{k}f(t_{k})+w_{n+1}f(1)+e_n(f)
\end{equation}
which is exact for polynomials of degree at most $2n+1$, i.e.
\begin{equation*}
e_n(f)=0, \qquad \forall f \in \PP_{2n+1}
\end{equation*}
being $\PP_n$ the set of all algebraic polynomials on $[-1,1]$ of degree at most $n$. \newline
It is well known that the interior quadrature nodes $t_{k}$, $k=1,\ldots,n$, are the zeros of the Jacobi polynomial of degree $n$ orthonormal with respect to the Jacobi weight
$w^{\alpha+1,\beta+1}(t)=(1-t)^2w^{\alpha,\beta}(t)$. The weights of the formula \eqref{LobattoRule} are given by
\begin{equation} \label{QuadrWeights}
w_{k}=\int_{-1}^1l_k(t)w^{\alpha,\beta}(t)dt, \qquad k=0,1,\ldots,n+1
\end{equation}
where, setting from now on $t_0=-1$ and $t_{n+1}=1$, $l_k(t)$ denotes the $(k+1)$-th Lagrange fundamental polynomial associated to the system of nodes $\left\{t_0,t_1,\ldots,t_n,t_{n+1}\right\}$. \newline
By standard arguments, it can be easily proved that the quadrature error $e_n(f)$ satisfies the following estimate
\begin{equation*}
|e_n(f)| \leq 2 \left(\int_{-1}^1 w^{\alpha,\beta}(t)\,dt\right) E_{2n+1}(f)_{\infty}, \qquad \forall f \in C([-1,1]),
\end{equation*}
where
\[E_n(f)_{\infty}=\inf_{P\in \PP_n}\|f-P\|_{\infty}\]
denotes the error of best approximation of a function $f\in C([-1,1])$ by means of polynomials of degree at most $n$ with respect to the uniform norm. \newline
The aim of the present paper is to provide a new error estimate for less regular functions belonging to some Sobolev-type subspaces of the weighted space $L_{w^{\alpha,\beta}}^1([-1,1])$. A similar estimate is proved in \cite{MMbook} for the classical Gauss-Jacobi quadrature formula, in \cite{FermoLaurita} for the Gauss-Lobatto rule with respect to the Legendre weight $w^{0,0}$ and, more recently, in \cite{Laurita} for the Gauss-Radau formula with respect to a general Jacobi weight $w^{\alpha,\beta}$.

\section{Notation and preliminary results}
\subsection{Notation}
For a general weight function $w(t)$ on $[-1,1]$  and  $1\leq p<+\infty$, let $L^p_{w}$ denote the weighted space of all real-valued measurable functions $f$ on $[-1,1]$ such that
\begin{equation*}
\|f\|_{L^p_{w}}=\|f w \|_p=\left(\int_{-1}^1 |f(t)w(t)|^pdt\right)^\frac 1p<+\infty,
\end{equation*}
and let $W_r^p(w)$ be the following weighted Sobolev-type subspaces of $L^p_{w}$
\begin{equation*}
W_r^p(w)=\left\{f \in L^p_{w} : \ f^{(r-1)}\in AC(-1,1), \ \|f^{(r)}\varphi^r w \|_p<+\infty\right\},
\end{equation*}
where $r \in \NN$, $r \geq 1$, $\varphi(t)=\sqrt{1-t^2}$ and $AC(-1,1)$ is the collection of all functions which are absolutely continuous on every closed subset of $(-1,1)$, equipped with the norm
\[\|f\|_{W_r^p(w)}=\|f w\|_p+\|f^{(r)}\varphi^r w\|_p.\]
For a function $f \in L_w^p$, the error of the best approximation of $f$ in $L_w^p$ by polynomials of degree at most $n$ is defined as
\[E_n(f)_{w,p}=\inf_{P\in \PP_n}\|f-P\|_{L^p_{w}}.\]
Fixed a Jacobi weight $w^{\gamma,\delta}(t)=(1-t)^{\gamma}(1+t)^{\delta}$, $\gamma,\delta>-1$, we will denote by $x_{n,k}^{\gamma,\delta}$ and $\lambda_{n,k}^{\gamma,\delta}$, $k=1,\ldots,n$, the nodes and coefficients of the corresponding $n$--point Gauss-Jacobi quadrature rule on $[-1,1]$ and by
\[p_n^{\gamma,\delta}(t)=\gamma_n^{\gamma,\delta}t^n+\mathrm{lower \, degree \, terms}, \qquad \gamma_n^{\gamma,\delta}>0,\]
the Jacobi polynomial of degree $n$ orthonormal w.r.t. $w^{\gamma,\delta}(t)$ having positive leading coefficient.\newline
In the sequel $\C$ will denote a positive constant which may assume different values in different formulas. We write $\C =\C(a,b,...)$ to say that $\C$ is dependent on the parameters $a,b,....$ and $\C \neq \C(a,b,...)$ to say that $\C$ is independent of them.
Moreover, we will write $A \sim B$, if there exists a positive constant $\C$ independent of the parameters of $A$ and $B$ such that
$1/\C \leq A/B \leq \C$.

\subsection{Preliminary results}
It is well known that for functions $f$ belonging to $W_1^p(w)$, the following Favard inequality
\begin{equation}\label{Favard1}
E_n(f)_{w,p}\leq \frac \C{n}E_{n-1}(f')_{\varphi w,p},
\end{equation}
is fulfilled for a positive constant $\C$ independent of $n$ and $f$  (see, for example, \cite[(2.5.22), p. 172]{MMbook}). By iteration of inequality \eqref{Favard1}, it follows that, for $f\in W_r^p(w)$, $r\geq 1$,  the estimate
\begin{equation}\label{Favard}
E_n(f)_{w,p}\leq \frac \C{n^r} E_{n-r}(f^{(r)})_{\varphi ^r w,p}, \qquad \C\neq\C(n,f)
\end{equation}
holds true. \newline
Let us recall that the knots $x_{n,k}^{\gamma,\delta}$ and Christoffel numbers $\lambda_{n,k}^{\gamma,\delta}$ of the Gaussian quadrature formula corresponding to the Jacobi weight $w^{\gamma,\delta}$  satisfy the following properties (see, for instance, \cite[(4.2.4), p. 249]{MMbook})
\begin{equation} \label{Deltaxnk}
x_{n,k+1}^{\gamma,\delta}-x_{n,k}^{\gamma,\delta} \sim  \frac{\sqrt{1-t^2}}{n}, \qquad  x_{n,k}^{\gamma,\delta} \leq t \leq x_{n,k+1}^{\gamma,\delta},
\end{equation}
and (see \cite[(14), p.673]{Nevai2})
\begin{equation} \label{lambdaktilde}
\lambda_{n,k}^{\gamma,\delta} \sim \frac{\sqrt{1-\left(x_{n,k}^{\gamma,\delta}\right)^2}}{n} w^{\gamma,\delta}(x_{n,k}^{\gamma,\delta})
\end{equation}
uniformly for $1 \leq k \leq n$, $n \in \NN$. Moreover, for the orthonormal polynomials $\left\{p_n^{\gamma,\delta}(t)\right\}_n$ one has that  (see \cite[(12.7.2), p. 309]{Szego})
\begin{equation} \label{gamman}
\frac{\gamma_{n}^{\gamma,\delta}}{\gamma_{n-1}^{\gamma,\delta}} \sim 1, \qquad \mathrm{as} \quad n \rightarrow \infty,
\end{equation}
and (see, for instance, \cite[p. 170]{Nevai}) the equality
\begin{equation} \label{Nevaiequality}
\frac{1}{p_{n-1}^{\gamma,\delta}\left(x_{n,k}^{\gamma,\delta}\right)}=
\frac{\gamma_{n-1}^{\gamma,\delta}}{\gamma_{n}^{\gamma,\delta}}\lambda_{n,k}^{\gamma,\delta}\left(p_n^{\gamma,\delta}\right)'
\left(x_{n,k}^{\gamma,\delta}\right)
\end{equation}
holds true. Furthermore (see \cite[Corollary 9.34, p. 171]{Nevai}
\begin{equation} \label{pnendpoints}
p_n^{\gamma,\delta}(1) \sim n^{\alpha+\frac{1}{2}} \qquad \left|p_n^{\gamma,\delta}(-1)\right| \sim n^{\beta+\frac{1}{2}},
\end{equation}
uniformly for $n \in \NN$ and, more generally, \cite[(4.4.49)-(4.2.30), p. 255]{MMbook})
\begin{equation} \label{pnendintervals1}
\left|p_n^{\gamma,\delta}(t)\right| \sim n^{\alpha+\frac{1}{2}}, \qquad 1-\frac{\C}{n^2}\leq t \leq 1,
\end{equation}
\begin{equation} \label{pnendintervals2}
\left|p_n^{\gamma,\delta}(t)\right| \sim n^{\beta+\frac{1}{2}}, \qquad  -1 \leq t \leq -1+\frac{\C}{n^2}.
\end{equation}

\section{Main results}
\begin{lemma} \label{lemma}
The nodes and the weights of the Gauss-Lobatto quadrature formula \eqref{LobattoRule} satisfy the following relations
\begin{subequations}
\begin{align}
w_0 \displaystyle & \leq \C \, \Delta t_0 w^{\alpha,\beta}(t_1), \label{weight1tilde} \\
w_k \displaystyle &  \sim \left \{ \begin{array}{lcl} \Delta t_k \, w^{\alpha,\beta}(t_k),  & \quad &  k=1,\ldots,n-1\\
\displaystyle  \Delta t_{k-1} \, w^{\alpha,\beta}(t_k) & \quad & k=n \end{array} \right., \label{weight2tilde}\\
w_{n+1} \displaystyle & \leq  \C \, \Delta t_n w^{\alpha,\beta}(t_n), \label{weight3tilde}
\end{align}
\end{subequations}
where $\Delta t_k=t_{k+1}-t_k$, $k=0,1,\ldots,n$ and  $\C \neq \C(n)$.
\end{lemma}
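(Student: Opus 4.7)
The strategy is to treat the interior weights $w_k$ ($k=1,\ldots,n$) and the boundary weights $w_0,w_{n+1}$ separately.

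For the interior weights, I would exploit that the nodal polynomial of the Gauss-Lobatto formula factors as $(1-t^2)\,p_n^{\alpha+1,\beta+1}(t)$. Consequently, each interior Lagrange basis polynomial decomposes as
\[
l_k(t)=\frac{1-t^2}{1-t_k^2}\,L_k(t),\qquad k=1,\ldots,n,
\]
where $L_k$ is the $k$-th Lagrange basis polynomial relative to the $n$ zeros of $p_n^{\alpha+1,\beta+1}$. Substituting into \eqref{QuadrWeights} and using $(1-t^2)\,w^{\alpha,\beta}(t)=w^{\alpha+1,\beta+1}(t)$, the integral reduces to the Christoffel number
\[
w_k=\frac{\lambda_{n,k}^{\alpha+1,\beta+1}}{1-t_k^2}.
\]
Then \eqref{lambdaktilde} with $(\gamma,\delta)=(\alpha+1,\beta+1)$ gives $w_k\sim\sqrt{1-t_k^2}\,w^{\alpha,\beta}(t_k)/n$, and \eqref{Deltaxnk} (same parameters, evaluated at $t=t_k$) provides $\sqrt{1-t_k^2}/n\sim\Delta t_k$ for $k=1,\ldots,n-1$ and $\sqrt{1-t_n^2}/n\sim\Delta t_{n-1}$ for $k=n$, which yields \eqref{weight2tilde}.

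For the boundary bound \eqref{weight1tilde} (the argument for \eqref{weight3tilde} is symmetric) my plan is to combine positivity of the Gauss-Lobatto weights with a test polynomial. Setting
\[
F(t)=\left[\frac{p_n^{\alpha+1,\beta+1}(t)}{p_n^{\alpha+1,\beta+1}(-1)}\right]^2\in\PP_{2n}\subset\PP_{2n+1},
\]
the Gauss-Lobatto rule is exact on $F$; since $F\ge 0$, $F(-1)=1$, $F(t_k)=0$ at every interior node, and $w_{n+1}F(1)\ge 0$,
\[
w_0\le w_0+w_{n+1}F(1)=\int_{-1}^1 F(t)\,w^{\alpha,\beta}(t)\,dt=\frac{1}{[p_n^{\alpha+1,\beta+1}(-1)]^2}\int_{-1}^1\left[p_n^{\alpha+1,\beta+1}(t)\right]^2 w^{\alpha,\beta}(t)\,dt.
\]
By \eqref{pnendpoints}, $[p_n^{\alpha+1,\beta+1}(-1)]^2\sim n^{2\beta+3}$, while $\Delta t_0\,w^{\alpha,\beta}(t_1)\sim n^{-2-2\beta}$, so the whole problem reduces to proving
\[
\int_{-1}^1\left[p_n^{\alpha+1,\beta+1}(t)\right]^2 w^{\alpha,\beta}(t)\,dt\le \C\,n.
\]

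The technical heart of the proof is precisely this integral bound. I would split $[-1,1]$ into the two endpoint strips $[-1,-1+\C/n^2]$ and $[1-\C/n^2,1]$, on which \eqref{pnendintervals2} and \eqref{pnendintervals1} respectively give $[p_n^{\alpha+1,\beta+1}(t)]^2\sim n^{2\beta+3}$ and $n^{2\alpha+3}$, so that a direct integration of $w^{\alpha,\beta}$ across a strip of width $\C/n^2$ produces an $O(n)$ contribution in each case; and the bulk region in between, where a standard Bernstein--Szeg\H{o}-type pointwise estimate $|p_n^{\alpha+1,\beta+1}(t)|\le \C\,(1-t)^{-\alpha/2-3/4}(1+t)^{-\beta/2-3/4}$ reduces the integrand to $\C(1-t)^{-3/2}(1+t)^{-3/2}$, whose integral over $[-1+\C/n^2,1-\C/n^2]$ is again $O(n)$. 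Summing the three contributions completes the proof.
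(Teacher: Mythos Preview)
Your treatment of the interior weights $w_k$, $k=1,\dots,n$, is exactly the paper's: both reduce to $w_k=\lambda_{n,k}^{\alpha+1,\beta+1}/(1-t_k^2)$ and invoke \eqref{lambdaktilde}--\eqref{Deltaxnk}.

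For the boundary weights your route is correct but genuinely different from the paper's. The paper does not use a test-polynomial/positivity argument; instead it starts from the representation \eqref{weight1} and, via the orthogonality identity
\[
\int_{-1}^{1}\Bigl[\tfrac{p_n^{\alpha+1,\beta+1}(t)}{p_n^{\alpha+1,\beta+1}(-1)}-\bigl(\tfrac{p_n^{\alpha+1,\beta+1}(t)}{p_n^{\alpha+1,\beta+1}(-1)}\bigr)^{2}\Bigr]w^{\alpha+1,\beta}(t)\,dt=0,
\]
obtains an \emph{equality} $2w_0=\int_{-1}^1[p_n^{\alpha+1,\beta+1}(t)/p_n^{\alpha+1,\beta+1}(-1)]^2\,w^{\alpha+1,\beta}(t)\,dt$. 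It then recasts this in terms of the Lagrange basis at $t_1$ and bounds the resulting integral by the single Christoffel number $\lambda_{n,1}^{\alpha+1,\beta+1}$, finishing with Nevai's identity \eqref{Nevaiequality} together with \eqref{gamman}, \eqref{pnendpoints}, \eqref{pnendintervals2}. Your approach, by contrast, obtains only the inequality $w_0\le \int [p_n^{\alpha+1,\beta+1}]^2 w^{\alpha,\beta}/[p_n^{\alpha+1,\beta+1}(-1)]^2$ via exactness and positivity, and then handles the integral by a three-region splitting with the pointwise Szeg\H{o} bound $|p_n^{\alpha+1,\beta+1}(t)|\le \C(1-t)^{-\alpha/2-3/4}(1+t)^{-\beta/2-3/4}$ on the bulk. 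What each buys: the paper stays entirely inside the Christoffel-number/orthogonal-polynomial toolkit already set up in Section~2 and avoids any explicit integral estimation; your argument is more elementary and self-contained (positivity plus a standard pointwise bound and direct integration), at the price of importing the Szeg\H{o} estimate, which the paper does not otherwise need. Both lead to $w_0\le \C n^{-2\beta-2}\sim \Delta t_0\,w^{\alpha,\beta}(t_1)$, and the case of $w_{n+1}$ is symmetric in either approach.
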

\begin{proof}
First, let us observe that the weights of the formula \eqref{LobattoRule}, given in \eqref{QuadrWeights}, have  the following alternative representation
\begin{subequations}
\begin{align}
w_0 \displaystyle & = \frac{1}{1-t_0}\int_{-1}^1\frac{p_n^{\alpha+1,\beta+1}(t)}{p_n^{\alpha+1,\beta+1}(t_0)}w^{\alpha+1,\beta}(t) dt,\label{weight1} \\
w_k \displaystyle & =\frac{1}{\left(1-t_k^2\right)} \int_{-1}^1\frac{p_n^{\alpha+1,\beta+1}(t)}{\left(t-t_k\right)\left(p_n^{\alpha+1,\beta+1}\right)'(t_k)}w^{\alpha+1,\beta+1}(t) dt, \qquad k=1,\ldots,n,   \label{weight2} \\
w_{n+1} \displaystyle & =  \frac{1}{1+t_{n+1}}\int_{-1}^1\frac{p_n^{\alpha+1,\beta+1}(t)}{p_n^{\alpha+1,\beta+1}(t_{n+1})}w^{\alpha,\beta+1}(t) dt.\label{weight3}
\end{align}
\end{subequations}
For $k=1,\ldots,n$, since
\begin{equation*}
w_{k}=\frac{\lambda_{n,k}^{\alpha+1,\beta+1}}{1-\left(x_{n,k}^{\alpha+1,\beta+1}\right)^2},
\end{equation*}
using \eqref{lambdaktilde} and \eqref{Deltaxnk} for $\gamma=\alpha+1$ and $\delta=\beta+1$, we can deduce that
\begin{equation*}
w_{k} \sim  \left \{\begin{array}{lcl} \displaystyle w^{\alpha,\beta}(t_k)\Delta t_k & \quad & k=1,\ldots,n-1 \\
\displaystyle w^{\alpha,\beta}(t_k)\Delta t_{k-1}  & \quad & k=n\end{array} \right.
\end{equation*}
i.e. \eqref{weight2tilde}. Now, in order to prove \eqref{weight1tilde}, let us observe that, since
\begin{equation*}
\int_{-1}^1\left[\frac{p_n^{\alpha+1,\beta+1}(t)}{p_n^{\alpha+1,\beta+1}(t_0)}-\left(\frac{p_n^{\alpha+1,\beta+1}(t)}{p_n^{\alpha+1,\beta+1}(t_0)}
\right)^2\right] w^{\alpha+1,\beta}(t)dt=0,
\end{equation*}
we can rewrite the first coefficient $w_0$ in \eqref{weight1} as follows
\begin{equation*}
w_0 \displaystyle = \frac{1}{2}\left[\frac{\left(p_n^{\alpha+1,\beta+1}\right)'(t_1)}{p_n^{\alpha+1,\beta+1}(t_0)} \right]^2\int_{-1}^1\left[\frac{p_n^{\alpha+1,\beta+1}(t)}{(t-t_1)\left(p_n^{\alpha+1,\beta+1}\right)'(t_1)}\right]^2
\frac{(t-t_1)^2 }{1+t}w^{\alpha+1,\beta+1}(t) dt.
\end{equation*}
Being \((t-t_1)^2/(1+t) \leq \C\)
and using \eqref{Nevaiequality} (with $\gamma=\alpha+1$ and $\delta=\beta+1$) for $k=1$ (we recall that in our notation $x_{n,1}^{\alpha+1,\beta+1} \equiv t_1$ ), we get
\begin{eqnarray*}
w_0 & \leq & \C \left[\frac{\left(p_n^{\alpha+1,\beta+1}\right)'(t_1)}{p_n^{\alpha+1,\beta+1}(t_0)} \right]^2 \lambda_{n,1}^{\alpha+1,\beta+1}\\
& = & \C \frac{1}{\left[p_n^{\alpha+1,\beta+1}(t_0)\right]^2} \left(\frac{\gamma_{n}^{\alpha+1,\beta+1}}{\gamma_{n-1}^{\alpha+1,\beta+1}}\right)^2
\frac{1}{\lambda_{n,1}^{\alpha+1,\beta+1}\left[p_{n-1}^{\alpha+1,\beta+1}\left(t_1\right)\right]^2}.
\end{eqnarray*}
Now, in virtue of \eqref{lambdaktilde}, it is
\begin{equation} \label{lambda1tilde}
\lambda_{n,1}^{\alpha+1,\beta+1} \sim \frac{\left(1-x_{n,1}^{\alpha+1,\beta+1}\right)^{\alpha+\frac{3}{2}}\left(1+x_{n,1}^{\alpha+1,\beta+1}\right)^{\beta+\frac{3}{2}}}{n} \sim \frac{1}{n^{2\beta+4}}.
\end{equation}
Taking into account \eqref{lambda1tilde}, and \eqref{gamman}, \eqref{pnendpoints} and \eqref{pnendintervals2} (all applied with $\gamma=\alpha+1$ and $\delta=\beta+1$), we deduce the following estimate of $w_0$
\begin{equation} \label{estw01}
w_0 \leq \C \frac{n^{2\beta+4}}{n^{4\beta+6}} = \frac{\C}{n^{2\beta+2}}.
\end{equation}
On the other hand,
\begin{equation*}
\Delta t_0 w^{\alpha,\beta}(t_1)=\left(1-x_{n,1}^{\alpha+1,\beta+1}\right)^{\alpha}\left(1+x_{n,1}^{\alpha+1,\beta+1}\right)^{\beta+1} \sim \frac{1}{n^{2\beta+2}}
\end{equation*}
which, combined with \eqref{estw01}, gives \eqref{weight1tilde}. In order to prove \eqref{weight3tilde}, proceeding in an analogous way, we start writing $w_{m+1}$ as follows
\begin{equation*}
w_{m+1}=\frac{1}{2}\left[\frac{\left(p_n^{\alpha+1,\beta+1}\right)'(t_n)}{p_n^{\alpha+1,\beta+1}(t_{n+1})} \right]^2\int_{-1}^1\left[\frac{p_n^{\alpha+1,\beta+1}(t)}{(t-t_n)\left(p_n^{\alpha+1,\beta+1}\right)'(t_n)}\right]^2
\frac{(t-t_n)^2 }{1-t}w^{\alpha+1,\beta+1}(t) dt.
\end{equation*}
Then, using $(t-t_n)^2/(1-t)\leq \C$, \eqref{lambdaktilde} and \eqref{Nevaiequality} for $\gamma=\alpha+1$, $\delta=\beta+1$ and $k=n$, \eqref{gamman}, \eqref{pnendpoints} and \eqref{pnendintervals1} also with $\gamma=\alpha+1$ and $\delta=\beta+1$, we deduce the estimate
\begin{eqnarray*}
w_{n+1}  \leq  \C \left[\frac{\left(p_n^{\alpha+1,\beta+1}\right)'(t_n)}{p_n^{\alpha+1,\beta+1}(t_{n+1})} \right]^2 \lambda_{n,n}^{\alpha+1,\beta+1} \sim  \frac{1}{n^{2\alpha+2}}
\end{eqnarray*}
with $\C \neq \C(n)$. Since
\begin{equation*}
\Delta t_n w^{\alpha,\beta}(t_n) \sim \left(1-x_{n,n}^{\alpha+1,\beta+1}\right)^{\alpha+1}\left(1+x_{n,n}^{\alpha+1,\beta+1}\right)^{\beta} \sim \frac{1}{n^{2\alpha+2}}
\end{equation*}
we can, finally, conclude that also \eqref{weight3tilde} holds true.
\end{proof}

Using the previous lemma we are able to prove our main result.

\begin{theorem} \label{QuadrErrorEstTheo}
For $f \in W_r^1(w^{\alpha,\beta})$, $r\geq 1$, the error of the Gauss-Lobatto quadrature formula \eqref{LobattoRule} satisfies the following estimate
\begin{equation}\label{QuadrErrorEst}
|e_n(f)| \leq \frac{\C}{(2n)^r}E_{2n+1-r}(f^{(r)})_{\varphi^r w^{\alpha,\beta},1}
\end{equation}
where $\varphi(t)=\sqrt{1-t^2}$ and $\C \neq \C(f,n)$ is a positive constant.
\end{theorem}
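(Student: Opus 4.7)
The strategy is to exploit the $\PP_{2n+1}$-exactness: for any $P\in\PP_{2n+1}$, $e_n(f)=e_n(f-P)$, so the triangle inequality gives
$$|e_n(f)|\leq \int_{-1}^{1}|f-P|\,w^{\alpha,\beta}(t)\,dt+\sum_{k=0}^{n+1}w_k\,|(f-P)(t_k)|=:I_1+I_2.$$
I would take $P$ to be the polynomial of best $L^1_{w^{\alpha,\beta}}$-approximation of $f$ of degree at most $2n+1$, so that $I_1=E_{2n+1}(f)_{w^{\alpha,\beta},1}$. Iterating the Favard inequality \eqref{Favard} then yields
$$I_1\leq \frac{\C}{(2n+1)^r}E_{2n+1-r}(f^{(r)})_{\varphi^r w^{\alpha,\beta},1}\leq \frac{\C}{(2n)^r}E_{2n+1-r}(f^{(r)})_{\varphi^r w^{\alpha,\beta},1},$$
which already matches the target rate.

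For $I_2$, Lemma \ref{lemma} plays the central role: the bounds \eqref{weight1tilde}--\eqref{weight3tilde} allow every weight $w_k$ to be replaced by $\Delta t_k\, w^{\alpha,\beta}(t_k)$, up to a constant and, for $k=n$, with $\Delta t_{n-1}$ in place of $\Delta t_n$. The resulting expression $\sum_k \Delta t_k\, w^{\alpha,\beta}(t_k)\,|(f-P)(t_k)|$ is a Riemann-type sampling of $|f-P|w^{\alpha,\beta}$. My plan is to associate each interior node $t_k$ with a subinterval $I_k\subset[-1,1]$ of length $\sim \Delta t_k$ on which $w^{\alpha,\beta}(t)\sim w^{\alpha,\beta}(t_k)$, and to compare the local term $\Delta t_k\, w^{\alpha,\beta}(t_k)|(f-P)(t_k)|$ with the local integral $\int_{I_k}|f-P|w^{\alpha,\beta}\,dt$. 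The endpoint contributions, where Lemma \ref{lemma} only bounds $w_0$ and $w_{n+1}$ from above, are handled by a local fundamental-theorem-of-calculus step on $[-1,t_1]$ and $[t_n,1]$, whose lengths are of order $n^{-2}$, a size compatible with \eqref{weight1tilde} and \eqref{weight3tilde}.

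The principal obstacle is precisely this comparison: the naive bound $\sum_k w_k |g(t_k)|\leq \C\|g\|_{L^1_{w^{\alpha,\beta}}}$ fails for a generic $g\in L^1_{w^{\alpha,\beta}}$, so $P$ cannot be an \emph{arbitrary} best approximation. I expect to overcome this either by replacing $P$ with the polynomial obtained by $r$-fold integration of a best $L^1_{\varphi^r w^{\alpha,\beta}}$-approximation $Q\in\PP_{2n+1-r}$ of $f^{(r)}$, which yields a kernel representation $(f-P)(t)=\int K(t,s)(f^{(r)}(s)-Q(s))\,ds$ that can be summed against the $w_k$ via Fubini and then estimated pointwise in $s$, or by invoking a Marcinkiewicz--Zygmund-type inequality adapted to the Gauss--Lobatto grid. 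Either route should produce $I_2\leq \C(2n)^{-r}E_{2n+1-r}(f^{(r)})_{\varphi^r w^{\alpha,\beta},1}$, which combined with the estimate for $I_1$ gives \eqref{QuadrErrorEst}.
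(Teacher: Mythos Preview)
Your decomposition $|e_n(f)|\le I_1+I_2$ and the reduction of $I_1$ via Favard match the paper exactly, and you correctly identify the real obstacle: pointwise sampling $\sum_k w_k|g(t_k)|$ is not controlled by $\|g\,w^{\alpha,\beta}\|_1$ alone. Where you diverge from the paper is in how you propose to overcome it.

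The paper does \emph{not} use a kernel representation or a Marcinkiewicz--Zygmund inequality. Instead it proves, once and for all, the Sobolev-type stability bound
\[
\sum_{k=0}^{n+1} w_k|g(t_k)| \le \C\,\|g\,w^{\alpha,\beta}\|_1+\frac{\C}{n}\,\|g'\varphi\,w^{\alpha,\beta}\|_1
\]
for any $g\in W_1^1(w^{\alpha,\beta})$. This is obtained by applying the elementary inequality $(b-a)|g(a)|$ (or $|g(b)|$) $\le \int_a^b|g|+(b-a)\int_a^b|g'|$ on \emph{every} subinterval $[t_k,t_{k+1}]$, not just the two endpoint ones, and then absorbing $\Delta t_k$ and $w^{\alpha,\beta}(t_k)$ into $\varphi(t)/n$ and $w^{\alpha,\beta}(t)$ via \eqref{Deltaxnk} and \eqref{tilde1menox}. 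Applying this with $g=f-P$ produces a new term $\frac{1}{n}\|(f-P)'\varphi\,w^{\alpha,\beta}\|_1$, which the paper then controls using the simultaneous-approximation inequality
\[
\|(f-P)'\varphi w^{\alpha,\beta}\|_1 \le \C(2n+2)\|(f-P)w^{\alpha,\beta}\|_1+\C_1\,E_{2n}(f')_{\varphi w^{\alpha,\beta},1},
\]
after which one takes the infimum over $P\in\PP_{2n+1}$ and applies Favard. The case $r>1$ follows by iterating Favard, not by redoing the argument.

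Compared to your two suggested routes: the $r$-fold integration/kernel approach could in principle be made to work but is more laborious and must be redone for each $r$; the Marcinkiewicz--Zygmund route is more problematic, since $L^1$ MZ inequalities typically apply only to polynomials, whereas here $g=f-P$ is not a polynomial. The paper's device---trading pointwise control for an extra $\frac{1}{n}\|g'\varphi w\|_1$---is the missing idea that makes the argument short.
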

\begin{proof}
We start by proving the estimate \eqref{QuadrErrorEst} in the case $r=1$. First, we are going to show that
\begin{equation} \label{firstinequality}
\sum_{k=0}^{n+1} w_k|f(t_k)| \leq \C \|f w^{\alpha,\beta}\|_1+\frac{\C}{n}\int_{-1}^1|f'(t)|\varphi(t)w^{\alpha,\beta}(t)dt.
\end{equation}
Taking into account \eqref{weight1tilde}, \eqref{weight2tilde} and \eqref{weight3tilde} we have
\begin{eqnarray*}
\sum_{k=0}^{n+1} w_k|f(t_k)|& \leq &
 \C \left[\right.
 \Delta t_0 w^{\alpha,\beta}(t_1)|f(t_0)|+ \sum_{k=1}^{n-1}\Delta t_k w^{\alpha,\beta}(t_k)|f(t_k)|+ \\
& + & \Delta t_{n-1} \, w^{\alpha,\beta}(t_n)|f(t_n)| + \Delta t_n w^{\alpha,\beta}(t_n)|f(t_{n+1})| \left.\right].
\end{eqnarray*}
In virtue of the following inequality
\begin{equation*}
\left. \begin{array}{l} (b-a)|f(a)|\\ (b-a)|f(b)|\end{array}\right\} \leq \int_a^b |f(t)|dt+(b-a)\int_a^b |f'(t)|dt,
\end{equation*}
it follows that
\begin{eqnarray*}
\sum_{k=0}^{n+1} w_k|f(t_k)|& \leq & \C \left[w^{\alpha,\beta}(t_1)\left(\int_{t_0}^{t_1}|f(t)|dt+\Delta t_0 \int_{t_0}^{t_1}|f'(t)|dt \right)\right.\\
& + & \sum_{k=1}^{n-1} w^{\alpha,\beta}(t_k) \left(\int_{t_k}^{t_{k+1}}|f(t)|dt+\Delta t_k \int_{t_k}^{t_{k+1}}|f'(t)|dt \right)\\
& + & w^{\alpha,\beta}(t_n)\left(\int_{t_{n-1}}^{t_n}|f(t)|dt+\Delta t_{n-1} \int_{t_{n-1}}^{t_n}|f'(t)|dt \right)\\
& + & w^{\alpha,\beta}(t_n)\left(\int_{t_n}^{t_{n+1}}|f(t)|dt+\Delta t_{n} \int_{t_n}^{t_{n+1}}|f'(t)|dt \right),
\end{eqnarray*}
from which, being for $k=0,1,\ldots,n$,
\begin{equation} \label{tilde1menox}
1\pm t_k \sim 1 \pm t \sim 1 \pm t_{k+1}, \qquad t_k \leq t \leq t_{k+1}, \quad
\end{equation}
and, taking into account \eqref{Deltaxnk} and also that
\begin{eqnarray*}
\Delta t_0 & \sim & \frac{\sqrt{1-t^2}}{n}, \qquad -1 < t \leq t_1, \\
\Delta t_n & \sim & \frac{\sqrt{1-t^2}}{n}, \qquad t_n \leq t < 1,
\end{eqnarray*}
 we deduce
\begin{eqnarray*}
\sum_{k=0}^{n+1} w_k|f(t_k)|& \leq & \C \left[\int_{t_0}^{t_1}|f(t)|w^{\alpha,\beta}(t)dt+\frac{1}{n}\int_{t_0}^{t_1}|f'(t)|\varphi(t)w^{\alpha,\beta}(t)dt \right.\\
& + & \sum_{k=1}^{n-1} \left(\int_{t_k}^{t_{k+1}}|f(t)|w^{\alpha,\beta}(t)dt+\frac{1}{n}\int_{t_k}^{t_{k+1}}|f'(t)|\varphi(t)w^{\alpha,\beta}(t)dt \right)\\
& + & \left.\int_{t_n}^{t_{n+1}}|f(t)|w^{\alpha,\beta}(t)dt+\frac{1}{n}\int_{t_n}^{t_{n+1}}|f'(t)|\varphi(t)w^{\alpha,\beta}(t)dt\right]\\
& = & \C \left[\int_{-1}^{1}|f(t)|w^{\alpha,\beta}(t)dt+\frac{1}{n}\int_{-1}^{1}|f'(t)|\varphi(t)w^{\alpha,\beta}(t)dt  \right]
\end{eqnarray*}
i.e. \eqref{firstinequality}. Since the $(n+1)$ quadrature formula \eqref{LobattoRule} is exact for any polynomial of degree at most $2n+1$, for $P \in \PP_{2n+1}$ we have
\begin{eqnarray*}
|e_n(f)| &=&|e_n(f-P)| \\
& \leq & \left|\int_{-1}^1(f-P)(t)w^{\alpha,\beta}(t)dt\right|+\left|\sum_{k=0}^{n+1}w_k(f-P)(t_k)\right|\\
& \leq & \|(f-P)w^{\alpha,\beta}\|_1+\sum_{k=0}^{n+1}w_k\left|(f-P)(t_k)\right|.
\end{eqnarray*}
Now, combining the previous inequality with \eqref{firstinequality}  and the following relation (see, for instance, \cite[p. 339]{MMbook}, \cite[p. 286]{MasRus})
\begin{equation*}
\|(f-P)'\varphi w^{\alpha,\beta} \|_1 \leq \C (2n+2) \|(f-P)w^{\alpha,\beta}\|_1 + \C_1 E_{2n}(f')_{\varphi w^{\alpha,\beta},1},
\end{equation*}
we can deduce
\begin{eqnarray*}
|e_n(f)| & \leq & \C\|(f-P)w^{\alpha,\beta}\|_1+\frac{\C}{n}\|(f-P)'\varphi w^{\alpha,\beta} \|_1\\
& \leq & \C\|(f-P)w^{\alpha,\beta}\|_1+\frac{\C_1}{n}E_{2n}(f')_{\varphi w^{\alpha,\beta},1}.
\end{eqnarray*}
Taking the infimum over $P \in \PP_{2n+1}$ and using the Favard inequality \eqref{Favard} we get
\begin{equation} \label{estimater1}
|e_n(f)| \leq \C E_{2n+1}(f)_{w^{\alpha,\beta},1}+\frac{\C_1}{n}E_{2n}(f')_{\varphi w^{\alpha,\beta},1} \leq \frac{\C}{2n}E_{2n} (f')_{\varphi w^{\alpha,\beta},1}.
\end{equation}
The estimate \eqref{QuadrErrorEst} for $r>1$ can be deduced from \eqref{estimater1}  by iterating the application of the Favard inequality.
\end{proof}

\section*{Acknowledgements}
The author is partially supported by University of Basilicata (local funds). The author is member of the INdAM Research group GNCS and the TAA-UMI Research Group. This research has been accomplished within ``Research ITalian network on Approximation'' (RITA).

\bibliographystyle{model1b-num-names}
\bibliography{Lauritabib}

\end{document}